\numberwithin{equation}{section}
\newtheorem{theorem}{Theorem}[section]
\newtheorem*{xthm}{Theorem}
\newtheorem{lem}[theorem]{Lemma}
\theoremstyle{definition}
\theoremstyle{remark}
\newtheorem{remark}[theorem]{Remark}
\newtheorem*{xrem}{Remark}
\newcommand{\cC}{{\mathcal C}}
\newcommand{\cD}{{\mathcal D}}
\newcommand{\kM}{{\mathfrak M}}
\newcommand{\R}{{\mathbb R}}
\newcommand{\Z}{{\mathbb Z}}
\def\al{\alpha}
\def\gm{\gamma}
\def\dl{\delta}
\def\Om{\Omega}
\def\0{\emptyset}
\def\1{{\bf 1}}
\def\6{\partial}
\def\8{\infty}
\def\lt{\left}
\def\rt{\right}
\def\wt{\widetilde}
\def\Xint#1{\mathchoice
 {\XXint\displaystyle\textstyle{#1}}%
 {\XXint\textstyle\scriptstyle{#1}}%
 {\XXint\scriptstyle\scriptscriptstyle{#1}}%
 {\XXint\scriptscriptstyle\scriptscriptstyle{#1}}%
 \!\int}
 \def\XXint#1#2#3{{\setbox0=\hbox{$#1{#2#3}{\int}$}
 \vcenter{\hbox{$#2#3$}}\kern-.5\wd0}}
\def\fint{\Xint-}
\begin{document}

\title{Fractional maximal operators with weighted Hausdorff content}

\author[H.~Saito]{Hiroki~Saito}
\address{
College of Science and Technology, Nihon University,
Narashinodai 7-24-1, Funabashi City, Chiba, 274-8501, Japan
}
\email{saitou.hiroki@nihon-u.ac.jp}

\author[H.~Tanaka]{Hitoshi~Tanaka}
\address{
Research and Support Center on Higher Education for the hearing and Visually Impaired, 
National University Corporation Tsukuba University of Technology,
Kasuga 4-12-7, Tsukuba City, Ibaraki, 305-8521 Japan
}
\email{htanaka@k.tsukuba-tech.ac.jp}
\author[T.~Watanabe]{Toshikazu~Watanabe}
\address{College of Science and Technology, Nihon University, 1-8-14 Kanda-Surugadai, Chiyoda-ku, Tokyo, 101-8308, Japan}
\curraddr{}
\email{twatana@edu.tuis.ac.jp}
\thanks{
The first author is supported by 
Grant-in-Aid for Young Scientists (B) (15K17551), 
the Japan Society for the Promotion of Science. 
The second author is supported by 
Grant-in-Aid for Scientific Research (C) (15K04918), 
the Japan Society for the Promotion of Science. 
}

\subjclass[2010]{42B25.}

\keywords{
Choquet integral;
Choquet-Lorentz space;
Fefferman-Stein inequality;
maximal operator;
weighted Hausdorff content
}

\date{}

\begin{abstract}
Let $n\ge 2$ be the spatial dimension.
The purpose of this note is to obtain some weighted estimates
for the fractional maximal operator $\kM_{\al}$ 
of order $\al$, $0\le\al<n$,
on the weighted Choquet-Lorentz space $L^{p,q}(H_{w}^{d})$, 
where the weight $w$ is arbitrary and 
the underlying measure is the weighted $d$-dimensional Hausdorff content $H^{d}_{w}$, $0<d\le n$. 
Concerning a dependence of two parameters $\al$ and $d$,
we establish a general form of the Fefferman-Stein type inequalities for $\kM_{\al}$.
Our results contain the works of Adams, \cite{Ad} and of Orobitg and Verdera \cite{OV} as the special cases.
Our results also imply the Tang result \cite{Ta}, 
if we assume the weight $w$ is in the Muckenhoupt $A_{1}$-class.
\end{abstract}

\maketitle

\section{Introduction}\label{sec1}
We will denote by $\cD$ the family of all dyadic cubes 
$Q=2^{-k}(j+[0,1)^{n})$, 
$k\in\Z,\,j\in\Z^{n}$. 
For a locally integrable function $f$ on $\R^{n}$, 
we define the dyadic fractional maximal operator 
$\kM_{\al}$, $0\le\al<n$, by
\[
\kM_{\al}f(x)
:=
\sup_{Q\in\cD}
\1_{Q}(x)
\fint_{Q}|f|\,{\rm d}y
l(Q)^{\al},
\]
where $\1_{E}$ denotes the characteristic function of $E$, 
the barred integral $\fint_{Q}f\,{\rm d}x$ 
stands for the usual integral average of $f$ over $Q$, 
and $l(Q)$ denotes the side length of the cube $Q$.
When $\al=0$, 
we simply write $\kM_{0}=\kM$ 
which is the Hardy-Littlewood maximal operator.
These maximal operators are fundamental tools
to study Harmonic analysis, potential theory, 
and the theory of partial differential equations
(cf. \cite{AH,G}).

If $E\subset\R^{n}$ and $0<d\le n$, then 
the $d$-dimensional Hausdorff content 
$H^{d}$ of $E$ is defined by
\[
H^{d}(E)
:=
\inf
\sum_{j=1}^{\8}
l(Q_{j})^{d},
\]
where the infimum is taken over all coverings of $E$ 
by countable families of dyadic cubes $Q_{j}$.
In \cite{OV}, 
for the Hardy-Littlewood maximal operator $\kM$,
Orobitg and Verdera proved the strong type inequality 
\[
\int_{\R^{n}}
(\kM f)^{p}\,{\rm d}H^{d}
\le C
\int_{\R^{n}}
|f|^{p}\,{\rm d}H^{d}
\]
for $d/n<p<\8$, 
and the weak type inequality
\[
\sup_{t>0}
tH^{d}(\{x\in\R^{n}:\,\kM f(x)>t\})^{1/p}
\le C
\int_{\R^{n}}
|f|^{p}\,{\rm d}H^{d},
\quad t>0,
\]
for $p=d/n$.
Here, 
the integrals are taken in the Choquet sense, that is, 
the Choquet integral of $f\ge 0$ 
with respect to a set function $\cC$ 
is defined by
\[
\int_{\R^{n}}f\,{\rm d}\cC
:=
\int_{0}^{\8}
\cC(\{x\in\R^{n}:\,f(x)>t\})\,{\rm d}t.
\]

By weights we will always mean nonnegative and locally integrable functions on $\R^{n}$. 
Let $w$ be a weight on $\R^{n}$. 
If $E\subset\R^{n}$ and $0<d\le n$, then 
the $d$-dimensional weighted Hausdorff content 
$H^{d}_{w}$ of $E$ is defined by
\[
H^{d}_{w}(E)
:=
\inf
\sum_{j=1}^{\8}
\fint_{Q_{j}}w\,{\rm d}x
l(Q_{j})^{d},
\]
where the infimum is taken over all coverings of $E$ 
by countable families of dyadic cubes $Q_{j}$.
Notice that if we let $w\equiv 1$, then 
$H^{d}_{w}=H^{d}$.
The weighted Hausdorff content, 
which is also called the weighted Hausdorff capacity,
plays a role for functions as a tool for measuring exceptional sets
in the weighted Sobolev space 
$W^{m,1}(\Om, w)$. 
For details of the weighted Hausdorff content, see \cite{AH,T}.

We say a weight $w$ belongs to $A_{1}(\R^{n})$, 
if $\kM w(x)\le Cw(x)$ holds 
for almost every $x\in\R^{n}$. 

Tang \cite{Ta} proved the following. 

\begin{xthm}[\cite{Ta}]
Suppose that a weight $w$ belongs to $A_{1}(\R^{n})$. 
Let $0\le\al<n$ and $0<d\le n$.
If $d/n<p<d/\al$, then
\[
\int_{\R^{n}}
(\kM_{\al}f)^{p}\,{\rm d}H^{d-\al p}_{w}
\le C
\int_{\R^{n}}
|f|^{p}\,{\rm d}H^{d}_{w},
\]
where the constant $C$ is independent of $f$.
\end{xthm}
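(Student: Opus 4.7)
The plan is to reduce the Choquet integral on the left-hand side to a sum over dyadic ``principal cubes'' via layer-cake and a Fubini swap, then to exploit the $A_{1}$ condition to collapse the two averages (of $|f|$ and of $w$) into a single average of $|f|^{p}w$, and finally to establish a Carleson-type embedding into the weighted Hausdorff content.

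Concretely, I would first write
\[
\int_{\R^{n}}(\kM_{\al}f)^{p}\,{\rm d}H^{d-\al p}_{w}
=p\int_{0}^{\8}t^{p-1}H^{d-\al p}_{w}(\{\kM_{\al}f>t\})\,{\rm d}t,
\]
and for each $t>0$ decompose the super-level set into the disjoint union of the maximal dyadic cubes $\{Q_{j}^{t}\}$ satisfying $l(Q_{j}^{t})^{\al}\fint_{Q_{j}^{t}}|f|\,{\rm d}y>t$; sub-additivity of $H^{d-\al p}_{w}$ yields
\[
H^{d-\al p}_{w}(\{\kM_{\al}f>t\})\le\sum_{j}\fint_{Q_{j}^{t}}w\,{\rm d}x\cdot l(Q_{j}^{t})^{d-\al p}.
\]
Setting $\Lambda(Q):=l(Q)^{\al}\fint_{Q}|f|\,{\rm d}y$ and letting $\beta(Q)$ be the supremum of $\Lambda$ over proper dyadic ancestors of $Q$, the cube $Q$ appears in $\{Q_{j}^{t}\}$ precisely for $t\in[\beta(Q),\Lambda(Q))$. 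After swapping the order of integration, the $t^{p-1}$-integration contributes at most $\Lambda(Q)^{p}=l(Q)^{\al p}(\fint_{Q}|f|\,{\rm d}y)^{p}$, and the $l(Q)^{\al p}$ cancels against the $l(Q)^{-\al p}$ of the Hausdorff exponent, leaving
\[
\int_{\R^{n}}(\kM_{\al}f)^{p}\,{\rm d}H^{d-\al p}_{w}
\le C\sum_{Q\in\cP}l(Q)^{d}\left(\fint_{Q}|f|\,{\rm d}y\right)^{p}\fint_{Q}w\,{\rm d}x,
\]
where $\cP:=\{Q:\beta(Q)<\Lambda(Q)\}$ is the family of ``principal cubes''. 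Note that the $\al$-dependence has now disappeared entirely.

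The $A_{1}$ hypothesis $\fint_{Q}w\,{\rm d}x\le C\essinf_{Q}w$ together with Jensen's inequality (for $p\ge 1$) then gives
\[
\left(\fint_{Q}|f|\,{\rm d}y\right)^{p}\fint_{Q}w\,{\rm d}x
\le C\fint_{Q}|f|^{p}w\,{\rm d}x,
\]
so the theorem reduces to the Carleson-type embedding
\[
\sum_{Q\in\cP}l(Q)^{d}\fint_{Q}|f|^{p}w\,{\rm d}x
\le C\int_{\R^{n}}|f|^{p}\,{\rm d}H^{d}_{w}.
\]

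This final embedding is the main obstacle: it is essentially the $\al=0$ special case of the theorem, i.e.\ the strong-type boundedness of $\kM$ on $L^{p}(H^{d}_{w})$. I would establish it via the Frostman-type duality representation $\int g\,{\rm d}H^{d}_{w}\approx\sup_{\mu}\int g\,{\rm d}\mu$, the supremum ranging over nonnegative measures $\mu$ satisfying $\mu(Q)\le\fint_{Q}w\,{\rm d}x\cdot l(Q)^{d}$ for every dyadic $Q$, combined with the stopping-time structure of $\cP$ that allows the left-hand sum to be tested against such $\mu$. The subrange $d/n<p<1$ (which is possible when $d<n$) is more delicate because Jensen reverses in the reduction step; there one replaces it by a direct level-set decomposition of $|f|$ that exploits the reverse Hölder property of $A_{1}$ weights.
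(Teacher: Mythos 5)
Your reduction in the first half is sound: the layer-cake formula, the maximal-cube decomposition of $\{\kM_{\al}f>t\}$, and the Fubini swap over the intervals $[\beta(Q),\Lambda(Q))$ do yield
$\int_{\R^{n}}(\kM_{\al}f)^{p}\,{\rm d}H^{d-\al p}_{w}\le C\sum_{Q\in\cP}l(Q)^{d}\bigl(\fint_{Q}|f|\,{\rm d}y\bigr)^{p}\fint_{Q}w\,{\rm d}x$,
and the cancellation of the $\al$-dependence is correct. But from there the argument has two genuine gaps. First, the Jensen/$A_{1}$ step is restricted to $p\ge 1$, while the theorem allows $d/n<p<1$ whenever $d<n$; for such $p$ the inequality $(\fint_{Q}|f|)^{p}\le C\fint_{Q}|f|^{p}$ is false (take $f=\1_{E}$ with $|E|=\eps|Q|$), and this failure lives entirely in $f$, so no reverse H\"older property of $w$ can repair it. Second, and more seriously, all of the actual difficulty has been deferred to the unproved embedding $\sum_{Q\in\cP}l(Q)^{d}\fint_{Q}|f|^{p}w\,{\rm d}x\le C\int_{\R^{n}}|f|^{p}\,{\rm d}H^{d}_{w}$, which you yourself identify as ``essentially the $\al=0$ special case of the theorem''; as written the proof is circular. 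The Frostman duality you invoke is standard for $H^{d}$, but for $H^{d}_{w}$ with the non-monotone gauge $\fint_{Q}w\,{\rm d}x\,l(Q)^{d}$ it amounts to precisely the comparison between $w\,{\rm d}H^{d}$-type quantities and ${\rm d}H^{d}_{w}$ that the authors single out in Remark \ref{rem1.4} as the point they do not know how to handle. You would also need a $d$-dimensional packing property for $\cP$ (a bound on $\sum_{Q\in\cP,\,Q\subset R}l(Q)^{d}$), which the stopping-time definition of $\cP$ does not supply for free.

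For contrast, the paper never attacks Tang's inequality head-on: it is the case $p=q$, $\gm=0$ of Theorem \ref{thm1.2}, which gives $\|\kM_{\al}f\|^{p}_{L^{p}(H^{d-\al p}_{w})}\le C\int|f|^{p}\,{\rm d}H^{d}_{\kM w}$ for an \emph{arbitrary} weight, after which $A_{1}$ yields $\fint_{Q}\kM w\le C\fint_{Q}w$ and hence $H^{d}_{\kM w}\le C\,H^{d}_{w}$. The proof of Theorem \ref{thm1.2} decomposes $f$ itself (not the level sets of $\kM_{\al}f$): each set $\{2^{k}<f\le 2^{k+1}\}$ is covered by cubes that are near-optimal for the content appearing on the right-hand side, and then only the single-cube estimate of Lemma \ref{lem2.1} is needed, with the cases $p\ge 1$ and $p<1$ handled by $(\kM_{\al}f)^{p}\le\kM_{\al p}(f^{p})$ and by $p$-subadditivity respectively. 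Because the covering is chosen adapted to the target content from the outset, no Carleson embedding or weighted Frostman lemma ever arises. To salvage your approach you would need to replace the Lebesgue averages $\fint_{Q}|f|^{p}w\,{\rm d}x$ in your final sum by quantities built from such a near-optimal cover of the level sets of $f$ — at which point you have essentially rederived the paper's argument.
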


Let $\wt{\cD}$ be a family of measurable sets 
having certain dyadic structure.
For $E\subset\R^{n}$, 
let $H^{d}_{\mu}(E)$ be the modified Hausdorff content 
defined by
\[
H^{d}_{\mu}(E)
:=
\inf\sum_{j}\mu(Q_{j})^{d/n},
\]
where $\mu$ is a locally finite Borel measure on $\R^{n}$
and the infimum is taken over all countable coverings of $E$ by 
$\{Q_{j}\}\subset\wt{\cD}$. 
Let 
$M_{\wt{\cD}}^{\mu}$ 
be the maximal operator adopted to the family $\wt{\cD}$ by
\[
M_{\wt{\cD}}^{\mu}f(x)
:=
\sup_{Q\in\wt{\cD}}
\1_{Q}(x)
\fint_{Q}|f|\,{\rm d}\mu.
\]
In \cite{STW}, the authors established 
the strong and the weak estimates of 
$M_{\wt{\cD}}^{\mu}$ 
on the Choquet space $L^{p}(H^{d}_{\mu})$.

To state our main results, 
we introduce the Choquet-Lorentz space 
associated with the weighted Hausdorff content.

For $0<p\le q\le\8$, 
we define the quasinorm 
\[
\|f\|_{L^{p,q}(H^{d}_{w})}
:=
\lt(
\int_0^{\8}
(t^{p}H^{d}_{w}(\{|f|>t\}))^{q/p}
\frac{{\rm d}t}{t}
\rt)^{1/q}
\]
and we denote by 
$L^{p,q}(H^{d}_{w})$ 
the set of all functions for which the above quasinorms are finite.
When $d=n$ and $w\equiv 1$,
this is classical Lorentz space.
If $p=q$, then 
$L^{p,q}(H^{d}_{w})=L^{p}(H^{d}_{w})$. 
Further, 
$L^{p,\8}(H^{d}_{w})$ 
is the set of all functions $f$ such that
\[
\sup_{t>0}
tH^{d}_{w}(\{x\in\R^{n}:\,f(x)>t\})^{1/p}
<\8.
\]

\begin{remark}\label{rem1.1}
It is well known that 
the classical Lorentz quasinorm 
$\|f\|_{L^{p,q}(\R^{n})}$ 
does not satisfy the triangle inequality.
However, if $p>1$, then 
there exists a certain norm which is equivalent to this quasinorm;
see \cite{G} for details.
In the case of Choquet-Lorentz spaces,
one can prove this fact analogously, 
which enable us that 
the quasinorm satisfies the countable subadditivity 
up to appropriate positive multiplicative constant.
\end{remark}

In \cite{Ad}, 
Adams investigated the boundedness of fractional maximal operators on unweighted Choquet-Lorentz spaces. 

\begin{xthm}[\cite{Ad}]
Let $0<d\le n$, $0\le\al<n$, 
and $p\le q$. 

\begin{enumerate}[{\rm(i)}]
\setlength{\parskip}{0cm}
\setlength{\itemsep}{0cm}
\item 
Let $d/n<p<d/\al$ and set 
$\dl=\frac{q}{p}(d-\al p)$, then 
there is a constant $C$ independent of $f$ such that
\[
\|\kM_{\al}f\|_{L^{q,p}(H^{\dl})}
\le C
\|f\|_{L^{p}(H^{d})}.
\]
\item For $p=d/n$, 
there is a constant $C$ independent of $f$ such that
\[
\|\kM_{\al}f\|_{L^{q,\8}(H^{\dl})}
\le C
\|f\|_{L^{p}(H^{d})}
\]
with $\dl=q(n-\al)$.
\end{enumerate}
\end{xthm}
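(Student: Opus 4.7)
\emph{Plan.} My approach reduces the Choquet--Lorentz bounds for the fractional operator $\kM_{\al}$ to Orobitg and Verdera's maximal theorem via a Hedberg-type pointwise estimate.

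The preparatory step is the comparison lemma
\[
\fint_{Q}|f|\,{\rm d}y\le C(n,d)\,l(Q)^{-d}\int_{Q}|f|\,{\rm d}H^{d}
\]
on an arbitrary dyadic cube $Q$, which follows by combining the trivial estimate $H^{d}(E)\le l(Q)^{d}$ for $E\subset Q$ with the isoperimetric-type bound $|E|\le C\,H^{d}(E)^{n/d}$ (giving $|E|\le C\,l(Q)^{n-d}H^{d}(E)$) and integrating via the layer-cake formula.

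Combining this with the H\"older inequality for Choquet integrals yields the alternative estimate $\fint_{Q}|f|\,{\rm d}y\le C\,l(Q)^{-d/p}\|f\|_{L^{p}(H^{d})}$. Juxtaposed with the trivial bound $\fint_{Q}|f|\,{\rm d}y\le \kM f(x)$ for $x\in Q$, multiplying by $l(Q)^{\al}$ and optimizing over the scale $l(Q)$ produces the Hedberg-type pointwise estimate
\[
\kM_{\al}f(x)\le C\,(\kM f(x))^{1-\al p/d}\,\|f\|_{L^{p}(H^{d})}^{\al p/d},
\]
valid in the range $d/n<p<d/\al$. Raising this to the $q$-th power and integrating against $H^{\dl}$ with $\dl=\frac{q}{p}(d-\al p)$, one reduces the right-hand integral to $\int(\kM f)^{r}\,{\rm d}H^{\dl}$ with $r=q(1-\al p/d)=\dl p/d$; the Orobitg--Verdera theorem for $\kM$ on $L^{r}(H^{\dl})$ applies exactly when $r>\dl/n$, which is equivalent to $p>d/n$, and yields the strong $L^{q}(H^{\dl})$ bound.

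To upgrade this to the Lorentz norm $L^{q,p}(H^{\dl})$ with $p\le q$, I would interpolate against the trivial upper-endpoint estimate $\|\kM_{\al}f\|_{\infty}\le C\|f\|_{L^{d/\al}(H^{d})}$ (which follows directly from the comparison lemma and H\"older) using a Marcinkiewicz-type theorem in the Choquet--Lorentz setting. For part (ii), at $p=d/n$ the Hedberg step degenerates and I would argue directly: the Calder\'on--Zygmund selection decomposes $\{\kM_{\al}f>t\}$ into disjoint maximal dyadic cubes $\{Q_{j}\}$ with $\int_{Q_{j}}|f|\,{\rm d}y>t\,l(Q_{j})^{n-\al}$, and the comparison lemma converts this into $l(Q_{j})^{d-\al}\le C\,t^{-1}\int_{Q_{j}}|f|\,{\rm d}H^{d}$, from which the weak-type bound follows by summing and invoking the countable subadditivity of $H^{\dl}$.

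The main obstacle is obtaining the sharp second Lorentz index $p$ on the left in (i). The Hedberg argument combined with Orobitg--Verdera produces only the strong $L^{q}(H^{\dl})$ bound; extracting the finer $L^{q,p}(H^{\dl})$ with $p<q$ requires either an O'Neil-type refinement of the pointwise decomposition adapted to the distribution function of $f$, or a real-interpolation argument. Since Choquet integrals are only countably subadditive and lack genuine additivity, the standard Calder\'on--Hunt interpolation machinery for Lorentz spaces is not directly available and must be re-established in this setting, a technical issue acknowledged in Remark~\ref{rem1.1}.
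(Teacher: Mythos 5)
Your reduction has two genuine gaps, one of which is fatal to the stated generality.

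\textbf{Part (i): the Hedberg reduction is lossy below the Sobolev exponent.} After the pointwise estimate and the Orobitg--Verdera theorem you are left needing the embedding $\int_{\R^{n}}|f|^{r}\,{\rm d}H^{\dl}\le C\bigl(\int_{\R^{n}}|f|^{p}\,{\rm d}H^{d}\bigr)^{r/p}$ with $r=\dl p/d$; you never state this step, and it is false whenever $\dl<d$ (equivalently $r<p$), which happens precisely when $q<pd/(d-\al p)$ --- in particular in the diagonal case $q=p$ with $\al>0$. Take $f=\1_{E}$ with $E$ a union of $N$ unit cubes placed so far apart that any dyadic cube meeting two of them has $l(Q)^{\dl}\ge N$; then $\int|f|^{r}\,{\rm d}H^{\dl}=H^{\dl}(E)\approx N$ while $\bigl(\int|f|^{p}\,{\rm d}H^{d}\bigr)^{r/p}\approx N^{r/p}\ll N$. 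The loss is intrinsic to Hedberg here: on this $E$ one has $\kM_{\al}f\approx 1$, but your pointwise bound gives only $\kM_{\al}f\lesssim N^{\al/d}$ because it inserts the global norm $\|f\|_{L^{p}(H^{d})}=N^{1/p}$. So no repair of the embedding can save the argument for $q$ below the Sobolev exponent; a localized substitute is needed. Even in the regime $\dl\ge d$ where your strong $L^{q}(H^{\dl})$ bound does go through, the passage to the second Lorentz index $p$ is deferred to an interpolation theorem you acknowledge is not available off the shelf, so part (i) remains unproved as written.

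\textbf{Part (ii): ``summing'' is exactly the hard step.} From the maximal cubes you correctly get $t^{q}l(Q_{j})^{q(n-\al)}\le C\bigl(\int_{Q_{j}}|f|^{p}\,{\rm d}H^{d}\bigr)^{q/p}$, but to conclude you must bound $\sum_{j}\int_{Q_{j}}|f|^{p}\,{\rm d}H^{d}$ by $\int_{\R^{n}}|f|^{p}\,{\rm d}H^{d}$. Countable subadditivity of the Choquet integral runs in the opposite direction, and for disjoint cubes the sum can genuinely exceed the global integral (e.g.\ $f\equiv 1$ on $N$ adjacent unit cubes gives $N$ versus $N^{d/n}$). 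This is precisely what the packing/selection lemma of Orobitg and Verdera (the subfamily $\{Q_{j_m}\}$ with $\sum_{Q_{j_m}\subset Q}l(Q_{j_m})^{d}\le 2l(Q)^{d}$, reproduced as (i)--(iii) in the proof of Theorem \ref{thm1.3}) is designed to supply, and it is missing from your sketch. For contrast, the proof in the text avoids both obstacles: for the strong type it computes $\|\kM_{\al}[\1_{Q}]\|_{L^{q,p}(H^{\dl})}$ exactly on a single cube (Lemma \ref{lem2.1}) and then decomposes $f$ by level sets covered by near-optimal cubes, so the $L^{q,p}$ norm with second index $p$ comes out directly for all $p\le q$ in the admissible range; for the weak type it invokes the packing lemma.
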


In this paper,
as an extension of this result to weighted settings,
we establish the following theorems.

\begin{theorem}\label{thm1.2}
Let $w$ be any weight on $\R^{n}$. 
Let 
$0<d\le n$, 
$0\le\al<n$, 
$0\le\gm\le\al$, and 
$\dl=\frac{q}{p}(d-(\al-\gm)p)$. 
We assume that $d/n<p\le q<n/\gm$ 
and that $p<d/\al$.
Then there exists a constant $C$ independent of $f$ and $w$ such that
\[
\|\kM_{\al}f\|_{L^{q,p}(H^{\dl}_{w})}
\le C
\|f\|_{L^{p}(H^{d}_{(\kM_{\gm q}w)^{p/q}})}.
\]
\end{theorem}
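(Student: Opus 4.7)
The strategy is a dyadic Calder\'on--Zygmund decomposition on the level sets of $\kM_{\al}f$, followed by a Carleson-type argument that reduces the estimate to the strong-type bound for the Hardy--Littlewood maximal operator on $L^{p}(H^{d}_{\mu})$ established in \cite{STW}, with $\mu$ the measure $(\kM_{\gm q}w)^{p/q}\,\rd x$.

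By the discretization sanctioned in Remark~\ref{rem1.1},
\[
\|\kM_{\al}f\|_{L^{q,p}(H^{\dl}_{w})}^{p}\approx\sum_{k\in\Z}\bigl(2^{kq}H^{\dl}_{w}(\{\kM_{\al}f>2^{k}\})\bigr)^{p/q}.
\]
For each $k$, let $\{Q_{j}^{k}\}_{j}$ be the collection of maximal dyadic cubes for which $\fint_{Q_{j}^{k}}|f|\,\rd y\cdot l(Q_{j}^{k})^{\al}>2^{k}$, so that they are pairwise disjoint and $\{\kM_{\al}f>2^{k}\}=\bigsqcup_{j}Q_{j}^{k}$. The definition of $H^{\dl}_{w}$ together with the subadditivity $(\sum_{j}a_{j})^{p/q}\le\sum_{j}a_{j}^{p/q}$ (which is valid because $p/q\le 1$) yields
\[
\bigl(2^{kq}H^{\dl}_{w}(\{\kM_{\al}f>2^{k}\})\bigr)^{p/q}\le\sum_{j}2^{kp}\Bigl(\fint_{Q_{j}^{k}}w\Bigr)^{p/q}l(Q_{j}^{k})^{\dl p/q}.
\]

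The numerology is the heart of the proof: since $\dl p/q=d-(\al-\gm)p$, one splits $l(Q_{j}^{k})^{\dl p/q}=l(Q_{j}^{k})^{d}\cdot l(Q_{j}^{k})^{-\al p}\cdot l(Q_{j}^{k})^{\gm p}$. Maximality of $Q_{j}^{k}$ gives the selection bound $2^{kp}l(Q_{j}^{k})^{-\al p}\le(\fint_{Q_{j}^{k}}|f|\,\rd y)^{p}$, while the elementary observation that $\kM_{\gm q}w(x)\ge\fint_{Q_{j}^{k}}w\cdot l(Q_{j}^{k})^{\gm q}$ for every $x\in Q_{j}^{k}$ gives
\[
\Bigl(\fint_{Q_{j}^{k}}w\Bigr)^{p/q}l(Q_{j}^{k})^{\gm p}\le\essinf_{Q_{j}^{k}}(\kM_{\gm q}w)^{p/q}.
\]
Setting $v:=(\kM_{\gm q}w)^{p/q}$, the combination of the previous estimates reduces Theorem~\ref{thm1.2} to the Carleson-type bound
\[
\sum_{k,j}\Bigl(\essinf_{Q_{j}^{k}}v\Bigr)\,l(Q_{j}^{k})^{d}\,\Bigl(\fint_{Q_{j}^{k}}|f|\,\rd y\Bigr)^{p}\lesssim\int_{\R^{n}}|f|^{p}\,\rd H^{d}_{v}.
\]

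To establish this Carleson inequality I would pass to a sparse subfamily of the $Q_{j}^{k}$ via a stopping-time argument on the averages $\fint_{Q}|f|\cdot l(Q)^{\al}$, extract pairwise disjoint ``own sets'' $E_{j}^{k}\subset Q_{j}^{k}$ of measure comparable to $|Q_{j}^{k}|$, and then invoke the strong-type boundedness $\|\kM g\|_{L^{p}(H^{d}_{v})}\lesssim\|g\|_{L^{p}(H^{d}_{v})}$ proved in \cite{STW} (applicable since $p>d/n$). The main obstacle is precisely this last step: because $p$ is only assumed to exceed $d/n$ and may therefore be smaller than $1$, Jensen's inequality in the form $(\fint|f|)^{p}\le\fint|f|^{p}$ is unavailable, and because the Choquet integral with respect to $H^{d}_{v}$ is only subadditive, the sparse family must be arranged so that $\bigl(\essinf_{Q_{j}^{k}}v\bigr)l(Q_{j}^{k})^{d}$ really sums to a genuine Hausdorff covering of the relevant level set of $\kM f$, not merely to a disjoint sum of averages; routing the argument through the boundedness of $\kM$ on $L^{p}(H^{d}_{v})$ is what makes this reduction feasible.
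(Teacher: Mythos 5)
Your reduction to the Carleson-type inequality
\[
\sum_{k,j}\Bigl(\essinf_{Q_{j}^{k}}v\Bigr)\,l(Q_{j}^{k})^{d}\,\Bigl(\fint_{Q_{j}^{k}}|f|\,\rd y\Bigr)^{p}
\le C\int_{\R^{n}}|f|^{p}\,\rd H^{d}_{v},\qquad v=(\kM_{\gm q}w)^{p/q},
\]
is correct and the numerology is handled cleanly, but this inequality is not a known lemma you can quote: it already contains the full strength of Theorem \ref{thm1.2} (with $v\equiv1$ it gives back the Adams and Orobitg--Verdera estimates), so the core of the proof has been deferred rather than completed. Moreover, the specific tool you name does not apply. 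The result of \cite{STW} concerns the maximal operator $M^{\mu}_{\wt{\cD}}$ whose averages are taken with respect to $\mu$ and the \emph{modified} content $H^{d}_{\mu}(E)=\inf\sum_{j}\mu(Q_{j})^{d/n}$; it is not the statement $\|\kM g\|_{L^{p}(H^{d}_{v})}\le C\|g\|_{L^{p}(H^{d}_{v})}$ for the Lebesgue-average maximal operator and the content built from $\fint_{Q}v\,\rd x\cdot l(Q)^{d}$. That one-weight statement is in fact false for a general $v$: when $d=n$ one has $H^{n}_{v}(E)=\int_{E}v\,\rd x$ for open sets $E$, so it would assert $L^{p}(v)$-boundedness of $\kM$ for every weight, which requires $v\in A_{p}$. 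Your remaining sketch (sparse selection, disjoint ``own sets'' $E^{k}_{j}$) also runs into the problem you yourself flag: disjointness in Lebesgue measure buys nothing on the right-hand side, because $\int\cdot\,\rd H^{d}_{v}$ is only subadditive and the pieces $\int_{E^{k}_{j}}|f|^{p}\,\rd H^{d}_{v}$ cannot be reassembled into $\int|f|^{p}\,\rd H^{d}_{v}$.

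The paper avoids this by decomposing in the opposite direction: it covers the level sets $\{2^{k}<f\le2^{k+1}\}$ by cubes $Q^{k}_{j}$ realizing $H^{d}_{(\kM_{\gm q}w)^{p/q}}$ within a factor $2$ --- this is where the Choquet integral on the right-hand side is actually produced, via $\sum_{k}2^{kp}H^{d}_{v}(\{2^{k}<f\le2^{k+1}\})\le C\int f^{p}\,\rd H^{d}_{v}$ --- then dominates $(\kM_{\al}f)^{p}$ by $\sum_{k}2^{p(k+1)}\sum_{j}\kM_{\al p}[\1_{Q^{k}_{j}}]$ when $p\ge1$ (or by $\sum_{k}2^{p(k+1)}\sum_{j}\kM_{\al}[\1_{Q^{k}_{j}}]^{p}$ when $p<1$), and applies the single-cube estimate of Lemma \ref{lem2.1}, namely $\|\kM_{\al}[\1_{Q}]\|^{p}_{L^{q,p}(H^{\dl}_{w})}\le C\fint_{Q}(\kM_{\gm q}w)^{p/q}\,\rd x\cdot l(Q)^{d}$, together with the identity $\|\kM_{\al}f\|^{p}_{L^{q,p}(H^{\dl}_{w})}=\frac1p\|(\kM_{\al}f)^{p}\|_{L^{q/p,1}(H^{\dl}_{w})}$ and the quasi-triangle inequality of Remark \ref{rem1.1}. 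If you want to salvage your outline, the missing ingredient is precisely such a single-cube computation applied to a near-optimal cover of the level sets of $f$, not of $\kM_{\al}f$.
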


Notice that the above inequality is 
\[
\int_0^{\8}
(t^{q}H^{\dl}_{w}(\{\kM_{\al}f>t\}))^{p/q}
\frac{{\rm d}t}{t}
\le C^{p}
\int_{\R^{n}}
|f|^{p}\,{\rm d}H^{d}_{(\kM_{\gm q}w)^{p/q}}.
\]

\begin{theorem}\label{thm1.3}
Let $w$ be any weight on $\R^{n}$. 
Let 
$0<d\le n$,
$0\le\al<n$, and 
$0\le\gm\le\al$.
For $d/n\le q\le n/\al$ and $p=d/n$,
there is a constant $C$ independent of $f$ and $w$ such that
\[
\sup_{t>0}
tH^{\dl}_{w}(\kM_{\al}f>t)^{1/q}
\le C
\lt(
\int_{\R^{n}}
|f|^{p}
(\kM_{\gm q}w)^{p/q}
\,{\rm d}H^{d}
\rt)^{1/p}
\]
with $\dl=q(n-\al+\gm)$.
\end{theorem}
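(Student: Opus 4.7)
The plan is to carry out a dyadic Calderón–Zygmund decomposition at level $t$: I take the maximal dyadic cubes $\{Q_{j}\}$ with $\fint_{Q_{j}}|f|\cdot l(Q_{j})^{\al}>t$, which gives the disjoint decomposition $\{\kM_{\al}f>t\}=\bigsqcup_{j}Q_{j}$ together with the key inequality $t\,l(Q_{j})^{n-\al}<\int_{Q_{j}}|f|\,dy$. Covering $\{\kM_{\al}f>t\}$ by $\{Q_{j}\}$ in the definition of $H^{\dl}_{w}$, with $\dl=q(n-\al+\gm)$, and multiplying by $t^{q}$, I obtain
$$t^{q}H^{\dl}_{w}\bigl(\{\kM_{\al}f>t\}\bigr)\le\sum_{j}\fint_{Q_{j}}w\,dy\cdot l(Q_{j})^{q\gm}\Bigl(\int_{Q_{j}}|f|\,dy\Bigr)^{q}.$$
For $x\in Q_{j}$ the pointwise bound $\fint_{Q_{j}}w\cdot l(Q_{j})^{q\gm}\le\kM_{q\gm}w(x)$ allows me to replace this factor by $\essinf_{Q_{j}}\kM_{q\gm}w$.

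Next I would invoke the Orobitg–Verdera-type endpoint inequality
$$\Bigl(\int_{Q}g\,dy\Bigr)^{d/n}\le C\int_{Q}g^{d/n}\,dH^{d}\qquad(g\ge 0),$$
which follows from the elementary estimate $|E|^{d/n}\le CH^{d}(E)$ (a consequence of $\sum l(R_{k})^{n}=\sum(l(R_{k})^{d})^{n/d}\le(\sum l(R_{k})^{d})^{n/d}$, valid since $n/d\ge 1$) together with a layer-cake representation. Applied with $g=|f|$ and $p=d/n$ this gives $(\int_{Q_{j}}|f|)^{p}\le C\int_{Q_{j}}|f|^{p}\,dH^{d}$, whence $(\int_{Q_{j}}|f|)^{q}\le C\bigl(\int_{Q_{j}}|f|^{p}\,dH^{d}\bigr)^{q/p}$. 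Exploiting $\sum a_{j}^{q/p}\le(\sum a_{j})^{q/p}$ (which holds because $q/p\ge 1$) and absorbing the $\essinf$ factor inside the integral, I am led to
$$t^{q}H^{\dl}_{w}\bigl(\{\kM_{\al}f>t\}\bigr)\le C\Bigl(\sum_{j}\int_{Q_{j}}|f|^{p}(\kM_{q\gm}w)^{p/q}\,dH^{d}\Bigr)^{q/p}.$$

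The final and hardest step is to pass from the sum of Choquet integrals over the disjoint cubes $\{Q_{j}\}$ to a single Choquet integral on $\R^{n}$. The naive bound $\sum_{j}\int_{Q_{j}}g\,dH^{d}\le C\int g\,dH^{d}$ does \emph{not} hold for the dyadic Hausdorff content in general, since ancestor dyadic cubes can provide much cheaper coverings than the individual $Q_{j}$'s. My plan to overcome this is to exploit the Carleson-type packing built into the Calderón–Zygmund decomposition—namely $\int_{\tilde Q_{j}}|f|\le t\,l(\tilde Q_{j})^{n-\al}$ for each dyadic parent $\tilde Q_{j}$—together with the countable subadditivity of the $H^{d}$-Choquet integral alluded to in Remark~\ref{rem1.1} and developed in \cite{Ad,OV,STW}, applied to the non-negative function $|f|^{p}(\kM_{q\gm}w)^{p/q}$. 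Once that sum-to-integral passage is secured, taking the $(1/q)$th power produces exactly the claimed weak-type bound.
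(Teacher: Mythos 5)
Your reduction is sound up to the last step and follows the paper's route: the maximal dyadic cubes $\{Q_j\}$ with $\fint_{Q_j}f\,l(Q_j)^{\al}>t$, the identity $t^{q}l(Q_j)^{\dl}\le l(Q_j)^{\gm q}\bigl(\int_{Q_j}f\bigr)^{q}$, the absorption of $\fint_{Q_j}w\cdot l(Q_j)^{\gm q}\le\inf_{Q_j}\kM_{\gm q}w$ into the Choquet integral, Lemma~3 of \cite{OV} for $\bigl(\int_{Q_j}f\bigr)^{p}\le C\int_{Q_j}f^{p}\,\rd H^{d}$, and the superadditivity of $s\mapsto s^{q/p}$. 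But the step you yourself identify as the hardest one --- passing from $\sum_j\int_{Q_j}g\,\rd H^{d}$ to $C\int_{\R^n}g\,\rd H^{d}$ --- is left as a plan, and the plan as stated does not work. First, ``countable subadditivity of the Choquet integral'' gives the inequality in the \emph{wrong} direction ($\int_{\bigcup Q_j}g\,\rd H^{d}\le\sum_j\int_{Q_j}g\,\rd H^{d}$); what you need is the reverse, quasi-additive inequality, which holds only for families satisfying the packing condition $\sum_{Q_j\subset Q}l(Q_j)^{d}\le C\,l(Q)^{d}$. Second, the Carleson-type estimate built into the stopping time, $\int_{Q}f\le t\,l(Q)^{n-\al}$ for every dyadic $Q$ strictly containing some $Q_j$, yields the packing condition only in the exponent $n-\al$: $\sum_{Q_j\subset Q}l(Q_j)^{n-\al}\le l(Q)^{n-\al}$. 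When $d\ge n-\al$ this does imply the $d$-packing (since $s\mapsto s^{d/(n-\al)}$ is superadditive), but when $d<n-\al$ --- a case permitted by the hypotheses, e.g.\ $\al=0$ and $d<n$, which is exactly the Orobitg--Verdera setting --- the full Calder\'on--Zygmund family genuinely need not satisfy $\sum_{Q_j\subset Q}l(Q_j)^{d}\le C\,l(Q)^{d}$ ($N$ subcubes of side $l(Q)/N^{1/(n-\al)}$ defeat it), so the quasi-additivity you need is simply false for the full family.

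The paper closes this gap by invoking Lemma~2 of \cite{OV}: one extracts a subfamily $\{Q_{j_m}\}$ that \emph{does} satisfy the $d$-packing condition, together with auxiliary non-overlapping dyadic cubes $\{\wt Q_k\}$ so that $\bigcup_jQ_j\subset(\bigcup_mQ_{j_m})\cup(\bigcup_k\wt Q_k)$ and $l(\wt Q_k)^{d}\le\sum_{Q_{j_m}\subset\wt Q_k}l(Q_{j_m})^{d}$; the cover used in estimating $H^{\dl}_{w}$ is this combined family, not $\{Q_j\}$ itself, and the contribution of the $\wt Q_k$ is reduced to that of the $Q_{j_m}$ by a case analysis on $\dl\le d$ versus $\dl\ge d$ (the latter using $qd/(p\dl)=n/(n-\al+\gm)>1$). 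Only then does the packing condition for the subfamily justify the final sum-to-integral passage. Your proposal needs this subfamily extraction (or an equivalent device); without it the last display is unjustified and, for $d<n-\al$, unprovable along the stated lines.
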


\begin{xrem}
In Theorem \ref{thm1.2}, 
when $\gm=0$, 
the condition $q<n/\gm$ 
should be interpreted as $q<\8$.
In both theorems \ref{thm1.2} and \ref{thm1.3}, 
if we take $\gm=0$ and $w\equiv 1$,
we obtain the Adams theorem. 
If we take $p=q$ and $\al=0$, 
then $\dl$ coincides $d$ and 
we obtain weighted estimates of 
the Orobitg and Verdera theorem.
In Theorem \ref{thm1.2}, 
if we take $p=q$ and $\gm=0$ 
and we assume $w\in A_{1}(\R^{n})$, 
then we obtain the Tang theorem. 
Finally,
let $p=q$, $\gm=\al$, and $d=n$, 
then $\dl$ becomes $n$ and 
we obtain the Fefferman-Stein inequality for the fractional maximal operator
due to Sawyer \cite{S} 
(see also \cite{CU}).
\end{xrem}

\begin{remark}\label{rem1.4}
One may expect that 
the following strong and weak type inequalities also hold:
\[
\int_0^{\8}
(t^{q}H^{\dl}_{w}(\{\kM_{\al}f>t\}))^{p/q}
\frac{{\rm d}t}{t}
\le C
\int_{\R^{n}}
|f|^{p}
(\kM_{\gm q}w)^{p/q}
\,{\rm d}H^{d},
\]
and 
\[
\sup_{t>0}
tH^{\dl}_{w}(\kM_{\al}f>t)^{1/q}
\le C
\lt(
\int_{\R^{n}}
|f|^{p}\,{\rm d}H^{d}_{(\kM_{\gm q} w)^{p/q}}
\rt)^{1/p}.
\]
However, 
since it is difficult to compare 
${\rm d}H^{d}_{w}$ and $w{\rm d}H^{d}$, 
we have no idea to show the both inequalities until now.
\end{remark}

The letter $C$ will be used for the positive finite constants that may change from one occurrence to another. 
Constants with subscripts, such as $C_1$, $C_2$, do not change in different occurrences. 

\section{Proof of Theorems}

We begin by proving strong type estimate Theorem \ref{thm1.2}.
To do this, we need the following lemma due to Lemma 1 in \cite{OV}.
Hereafter,
we always assume $p<q$.

\begin{lem}\label{lem2.1}
Let 
$0<d<n$, $0<\al<n$, $0\le\gm\le\al$, and 
$\dl=\frac{q}{p}(d-(\al-\gm)p)$.
We assume $d/n<p\le q<n/\gm$ and $p<d/\al$.
Then we have that 
\[
\|\kM_{\al}[\1_{Q}]\|_{L^{q,p}(H^{\dl}_{w})}^{p}
\le C_{n,p,d}
\fint_{Q}(\kM_{\gm q}w)^{p/q}\,{\rm d}x
l(Q)^{d}
\]
for all dyadic cubes $Q\in\cD$ 
and for any weight $w$.
\end{lem}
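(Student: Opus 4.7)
The plan is to exploit the explicit dyadic layering of $\kM_\al[\1_Q]$ and then to reduce the $L^{q,p}(H^\dl_w)$ quasinorm to a single geometric series whose convergence is built in by the hypothesis $p>d/n$. Since $Q$ is dyadic, every dyadic cube $R$ that meets $Q$ is either a descendant or an ancestor; hence, writing $R_k$ for the unique dyadic ancestor of $Q$ with $l(R_k)=2^k l(Q)$ (so $R_0=Q$), a direct computation shows that $\kM_\al[\1_Q](x)=2^{k(\al-n)}l(Q)^\al$ for $x\in R_k\setminus R_{k-1}$ (with $R_{-1}:=\0$). In particular, $\{\kM_\al[\1_Q]>t\}$ equals $R_k$ precisely when $t$ lies in the dyadic window $I_k:=[2^{(k+1)(\al-n)}l(Q)^\al,\,2^{k(\al-n)}l(Q)^\al)$, and the superlevel set is empty once $t\ge l(Q)^\al$.

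I would then unfold the $L^{q,p}$ quasinorm as a sum over these intervals. The inner integral $\int_{I_k}t^{p-1}\,{\rm d}t$ contributes a factor of size $l(Q)^{\al p}2^{kp(\al-n)}$, while the trivial self-cover of $R_k$ yields the upper bound $H^\dl_w(R_k)\le\fint_{R_k}w\,{\rm d}x\cdot l(R_k)^\dl$. Combining these with the identity $\dl p/q=d-(\al-\gm)p$, the entire quasinorm collapses to
\[
\|\kM_\al[\1_Q]\|_{L^{q,p}(H^\dl_w)}^p\le C\,l(Q)^{d+\gm p}\sum_{k\ge 0}2^{k(d-p(n-\gm))}\lt(\fint_{R_k}w\,{\rm d}x\rt)^{p/q}.
\]

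The key step is now to convert the $R_k$-averages of $w$ back into an integral over $Q$ alone. For every $x\in Q\subseteq R_k$ the definition of $\kM_{\gm q}$ furnishes the pointwise lower bound $\kM_{\gm q}w(x)\ge l(R_k)^{\gm q}\fint_{R_k}w\,{\rm d}y$; raising to the power $p/q$ and averaging over $x\in Q$ then gives $(\fint_{R_k}w)^{p/q}\le(2^k l(Q))^{-\gm p}\fint_Q(\kM_{\gm q}w)^{p/q}\,{\rm d}x$. Inserting this kills the $l(Q)^{\gm p}$ prefactor and brings the $2^k$-exponent down to $d-pn$; since $p>d/n$, the resulting series sums to a constant $C_{n,p,d}$, and the desired inequality drops out.

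The only subtlety is the exponent bookkeeping: the value $\dl=\frac{q}{p}(d-(\al-\gm)p)$ is chosen precisely so that the $\gm$-dependent powers of $l(Q)$ and of $2^k$ cancel between the Hausdorff-content bound on $H^\dl_w(R_k)$ and the $\kM_{\gm q}w$ lower bound, leaving $l(Q)^d$ outside the sum and the summable tail $\sum 2^{k(d-pn)}$ inside. Beyond this careful tracking of exponents I do not foresee any real obstacle; in particular no auxiliary covering or decomposition lemma is needed, because the superlevel sets of $\kM_\al[\1_Q]$ are already single dyadic cubes.
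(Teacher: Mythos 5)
Your proposal is correct and follows essentially the same route as the paper: the same explicit dyadic layering of $\kM_\al[\1_Q]$ over the ancestors of $Q$, the same interval-by-interval evaluation of the $L^{q,p}$ quasinorm with the trivial self-cover bound $H^\dl_w(R_k)\le\fint_{R_k}w\,{\rm d}x\,l(R_k)^\dl$, and the same pointwise lower bound for $\kM_{\gm q}w$ on $Q$ to absorb the ancestor averages, with convergence from $d<np$. The only cosmetic difference is that you apply the $\kM_{\gm q}w$ bound term by term in $k$, whereas the paper first takes a supremum over all dyadic $P\supset Q$ and then compares that supremum with $\fint_Q(\kM_{\gm q}w)^{p/q}\,{\rm d}x$.
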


\begin{proof}
For fixed dyadic cube $Q$,
we let $\pi^{0}(Q)=Q$ and
$\pi^{j}(Q)$ is the smallest dyadic cube containing $\pi^{j-1}(Q)$, $j=1,2,\dots$.
Now,
we see that 
\[
\kM_{\al}[\1_{Q}](x)
=
a_0\1_{Q}(x)
+
\sum_{j=1}^{\8}
a_{j}\1_{\pi^j(Q)\setminus\pi^{j-1}(Q)}(x),
\]
where 
\[
a_{j}
=
\frac{|Q|}{|\pi^j(Q)|}
l(\pi^j(Q))^{\al}
=
l(Q)^{\al}
2^{(\al-n)j},
\quad j=0,1,\ldots.
\]
It follows that 
\begin{align*}
\|\kM_{\al}[\1_{Q}]\|_{L^{q,p}(H^{\dl}_{w})}^{p}
&=
\int_0^{\8}
(t^qH^{\dl}_{w}(\{\kM_{\al}[\1_{Q}]>t\}))^{p/q}
\,\frac{{\rm d}t}{t}
\\ &=
\sum_{j=1}^{\8}
\int_{a_{j}}^{a_{j-1}}
(t^qH^{\dl}_{w}(\{\kM_{\al}[\1_{Q}]>t\}))^{p/q}
\,\frac{{\rm d}t}{t}
\\ &\le 
\sum_{j=1}^{\8}
H^{\dl}_{w}(\pi^j(Q))^{p/q}
\int_{a_{j}}^{a_{j-1}}
t^{p-1}\,{\rm d}t
\\ &\le p^{-1}
\sum_{j=1}^{\8}
\lt(
\fint_{\pi^j(Q)}w\,{\rm d}x
l(\pi^j(Q))^{\dl}
\rt)^{p/q}
(a_{j-1})^{p}.
\end{align*}
We notice that 
\[
(a_{j-1})^{p}
=
\lt(
l(Q)^{\al}
2^{(\al-n)(j-1)}
\rt)^{p}
=
2^{(n-\al)p}
\lt(
l(Q)^{\al}
2^{(\al-n)j}
\rt)^{p}.
\]
Thus,
\begin{align*}
&\lt(
\fint_{\pi^j(Q)}w\,{\rm d}x
l(\pi^j(Q))^{\dl}
\rt)^{p/q}
(a_{j-1})^{p}
\\ 
=& 2^{(n-\al)p}
\lt(
\fint_{\pi^j(Q)}w\,{\rm d}x
l(\pi^j(Q))^{\gm q}
\rt)^{p/q}
l(\pi^{j}(Q))^{d-\al p}
l(Q)^{\al p}
2^{(\al-n)jp} \\ 
=&
2^{(n-\al)p}
\lt(
\fint_{\pi^j(Q)}w\,{\rm d}x
l(\pi^j(Q))^{\gm q}
\rt)^{p/q}
l(Q)^{d}
2^{(d-np)j} \\
\le &
C_{n,p}
\sup_{P\in\cD:\,P\supset Q}
\lt(
\fint_{P}w\,{\rm d}x
l(P)^{\gm q}
\rt)^{p/q}
l(Q)^{d}
2^{(d-np)j},
\end{align*}
where we have used the definition of $\dl$
and this implies 
\begin{align*}
\|\kM_{\al}[\1_{Q}]\|_{L^{q,p}(H^{\dl}_{w})}^{p}
&\le 
C_{n,p}
\sup_{P\in\cD:\,P\supset Q}
\lt(
\fint_{P}w\,{\rm d}x
l(P)^{\gm q}
\rt)^{p/q}
\cdot
l(Q)^{d}
\cdot
\sum_{j=1}^{\8}
2^{(d-np)j}
\\ &\le C_{n,p,d}
\fint_{Q}(\kM_{\gm q}w)^{p/q}\,{\rm d}x
l(Q)^{d},
\end{align*}
where we have used 
$d/n<p$. 
We notice that 
the condition $p<d/\al$ 
needs to be $\dl>0$.
\end{proof}

\begin{proof}[Proof of Theorem \ref{thm1.2}]
First we notice that for $0<p\le q<\8$, we have
\[
\|\kM_{\al}f\|^{p}_{L^{q,p}(H^{\dl}_{w})}
=
\frac{1}{p}
\|(\kM_{\al}f)^{p}\|_{L^{q/p,1}(H^{\dl}_{w})}.
\]
Indeed,
\begin{align*}
\|\kM_{\al}f\|^{p}_{L^{q,p}(H^{\dl}_{w})}
&=
\int_{0}^{\8}
(t^{q}H^{\dl}_{w}(\{\kM_{\al}f>t\}))^{p/q}\,\frac{{\rm d}t}{t} \\
&=
\frac{1}{p}
\int_{0}^{\8}
(t^{q/p}H^{\dl}_{w}(\{(\kM_{\al}f)^{p}>t\}))^{p/q}\,\frac{{\rm d}t}{t} \\
&=
\frac{1}{p}
\int_{0}^{\8}
(t^{q/p}H^{\dl}_{w}(\{(\kM_{\al}f)^{p}>t\}))^{1/(q/p)}\,\frac{{\rm d}t}{t} \\
&=
\frac{1}{p}
\|(\kM_{\al}f)^{p}\|_{L^{q/p,1}(H^{\dl}_{w})}.
\end{align*}
We may assume that $f\ge 0$.
For each integer $k$, let 
$\{Q_{j}^k\}_{j}$ be a family of nonoverlapping dyadic cubes $Q_{j}^k$ 
such that
\[
\{x\in\R^{n}:\,2^k<f(x)\le 2^{k+1}\}
\subset
\bigcup_{j}Q_{j}^k
\]
and
\[
\sum_{j}
\fint_{Q_{j}^k}(\kM_{\gm q}w)^{p/q}\,{\rm d}x
l(Q_{j}^k)^{d}
\le 2
H_{(\kM_{\gm q}w)^{p/q}}^{d}(\{x\in\R^{n}:\,2^k<f(x)\le 2^{k+1}\}).
\]
Set 
$g=\sum_k2^{p(k+1)}\1_{A_k}$, 
where 
$A_k=\bigcup_{j}Q_{j}^k$. 
Thus, $f^{p}\le g$.

Assume first that $1\le p$. 
Then
\[
(\kM_{\al}f)^{p}
\le
\kM_{\al p}(f^{p})
\le
\kM_{\al p}(g)
\le
\sum_{k}2^{p(k+1)}
\sum_{j}\kM_{\al p}(\1_{Q_{j}^k}).
\]
Recalling Remark \ref{rem1.1}, 
by $q/p>1$,
we have that 
\[
\frac{1}{p}
\|(\kM_{\al}f)^{p}\|_{L^{q/p,1}(H^{\dl}_{w})}
\le C
\frac{1}{p}
\sum_{k}2^{p(k+1)}
\sum_{j}
\|\kM_{\al p}(\1_{Q_{j}^k})\|_{L^{q/p,1}(H^{\dl}_{w})}.
\]
By Lemma \ref{lem2.1},
\begin{align*}
&
\frac{1}{p}
\sum_{k}2^{p(k+1)}
\sum_{j}
\|\kM_{\al p}(\1_{Q_{j}^k})\|_{L^{q/p,1}(H^{\dl}_{w})} \\
\le &
\frac{1}{p}
\sum_{k}2^{p(k+1)}
\sum_{j}
C_{n,p,d}
\fint_{Q_{j}^{k}}
(\kM_{\gm q}w)^{p/q}\,{\rm d}x
\cdot
l(Q_{j}^{k})^{d} \\
\le&
C
\sum_k2^{p(k+1)}
H^{d}_{(\kM_{\gm q}w)^{p/q}}(\{x:2^k<f(x)\le 2^{k+1}\}) \\ 
\le & 
C
\sum_k\frac{2^{2p}}{2^{p}-1}
\int_{2^{p(k-1)}}^{2^{pk}}
H^{d}_{(\kM_{\gm q}w)^{p/q}}(\{x:f(x)^{p}>t\})\,{\rm d}t \\ 
\le&
C
\int_{\R^{n}}f^{p}\,
{\rm d}H^{d}_{(\kM_{\gm q}w)^{p/q}},
\end{align*}
which proves this case. 

Assume now that $d/n<p<1$.
Since 
$f\le\sum_k2^{k+1}\1_{A_k}$, 
\[
\kM_{\al}f
\le
\sum_k2^{k+1}\sum_{j}\kM_{\al}[\1_{Q_{j}^k}].
\]
We have that, since $p<1$, 
\[
(\kM_{\al}f)^{p}
\le
\sum_k2^{p(k+1)}\sum_{j}
\kM_{\al}[\1_{Q_{j}^k}]^{p}
\]
and, hence,
\begin{align*}
\|\kM_{\al}f\|^{p}_{L^{q,p}(H^{\dl}_{w})}
&=
\frac{1}{p}
\|(\kM_{\al}f)^{p}\|_{L^{q/p,1}(H^{\dl}_{w})} \\
&\le
C
\frac{1}{p}
\sum_{k}2^{p(k+1)}\sum_{j}
\|\kM_{\al}[\1_{Q_{j}^k}]^{p}\|_{L^{q/p,1}(H^{\dl}_{w})} \\
&=
C
\sum_{k}2^{p(k+1)}\sum_{j}
\|\kM_{\al}[\1_{Q_{j}^k}]\|_{L^{q,p}(H^{\dl}_{w})}^{p} \\
&\le
C_{n,p,d}
\sum_{k}2^{p(k+1)}
\sum_{j}
\fint_{Q_{j}^{k}}
(\kM_{\gm q}w)^{p/q}\,{\rm d}x
\cdot
l(Q_{j}^{k})^{d} \\
&\le C
\int_{\R^{n}}f^{p}\,
{\rm d}H^{d}_{(\kM_{\gm q}w)^{p/q}}.
\end{align*}
This completes the proof.
\end{proof}

Next,
we prove the weak type estimate Theorem \ref{thm1.3}.
The strategy of the proof is also due to \cite{OV}.

\begin{proof}[Proof of Theorem \ref{thm1.3}]
Given $t>0$, let $\{Q_{j}\}$ be 
the family of maximal dyadic cubes $Q_{j}$ 
such that
\[
\fint_{Q_{j}}f\,{\rm d}xl(Q_{j})^{\al}
>t
\]
(we assume again, without loss of generality, that $f\ge 0$). 
Then
\[
\{x:\,\kM_{\al}f(x)>t\}
=
\bigcup_{j}Q_{j}.
\]

By Lemma 3 of \cite{OV},
\begin{equation}\label{2.1}
t^ql(Q_{j})^{\dl}
\le
\lt(l(Q_{j})^{\gm}\int_{Q_{j}}f\,dx\rt)^q
\le  C
l(Q_{j})^{\gm q}
\lt(\int_{Q_{j}}f^{p}\,{\rm d}H^{d}\rt)^{q/p}.
\end{equation}
Again, applying the proof of Lemma 2 of \cite{OV} 
to the family $\{Q_{j}\}$, 
we see the following.

There exist a subfamily 
$\{Q_{j_m}\}\subset\{Q_{j}\}$ 
and a set of non-overlapping dyadic cubes 
$\{\wt{Q}_k\}$ such that 

\begin{itemize}
\item[(i)] 
for each dyadic cube $Q$, 
\[
\sum_{Q_{j_m}\subset Q}
l(Q_{j_m})^{d}
\le 2l(Q)^{d};
\]
\item[(ii)] 
\[
\bigcup_{j}Q_{j}
\subset
\lt(\bigcup_mQ_{j_m}\rt)
\cup
\lt(\bigcup_k\wt{Q}_k\rt);
\]
\item[(iii)] 
for each $k$, 
\[
l(\wt{Q}_k)^{d}
\le
\sum_{Q_{j_m}\subset\wt{Q}_k}
l(Q_{j_m})^{d}.
\]
\end{itemize}

By (ii), we have that 
\[
t^qH^{\dl}_{w}\lt(\bigcup_{j}Q_{j}\rt)
\le
t^q
\sum_m
\fint_{Q_{j_m}}w\,{\rm d}x
l(Q_{j_m})^{\dl}
+
t^q
\sum_k
\fint_{\wt{Q}_k}w\,{\rm d}x
l(\wt{Q}_k)^{\dl}.
\]
For all $Q_{j_m}$, 
by \eqref{2.1} we see that 
\begin{align*}
t^q
\fint_{Q_{j_m}}w\,{\rm d}x
l(Q_{j_m})^{\dl}
&\le C
\fint_{Q_{j_m}}w\,{\rm d}x
l(Q_{j})^{\gm q}
\lt(
\int_{Q_{j_m}}f^{p}\,{\rm d}H^{d}
\rt)^{q/p} \\
&\le
C
\lt(
\int_{Q_{j_m}}f^{p}(\kM_{\gm q} w)^{p/q}\,{\rm d}H^{d}
\rt)^{q/p}.
\end{align*}

For $\wt{Q}_k$,
we separate the argument.

\noindent
{\bf Case $\dl\le d$.}
By (iii),
for each $k$,
\begin{align*}
l(\wt{Q}_k)^{\dl}
&=
\lt(l(\wt{Q}_k)^{d}\rt)^{\dl/d}
\le
\lt(
\sum_{Q_{j_m}\subset\wt{Q}_k}
l(Q_{j_m})^{d}
\rt)^{\dl/d}
\\ &\le
\sum_{Q_{j_m}\subset\wt{Q}_k}
l(Q_{j_m})^{\dl}.
\end{align*}
Thus, by \eqref{2.1} we have that 
\[
t^{q}
\fint_{\wt{Q}_k}w\,{\rm d}x
l(\wt{Q}_k)^{\dl}
\le C
\sum_{Q_{j_m}\subset\wt{Q}_k}
\lt(
\int_{Q_{j_m}}f^{p}(\kM_{\gm q} w)^{p/q}\,{\rm d}H^{d}
\rt)^{q/p}.
\]

\noindent
{\bf Case $\dl\ge d$.}
By (iii),
for each $k$,
\begin{align*}
t^ql(\wt{Q}_k)^{\dl}
&=
t^q(l(\wt{Q}_k)^{d})^{\dl/d} \\
&\le
\lt(
\sum_{Q_{j_m}\subset\wt{Q}_k}
(t^ql(Q_{j_m})^{\dl})^{d/\dl}
\rt)^{\dl/d}
\end{align*}
Thus, by \eqref{2.1} we have that 
\begin{align*}
t^{q}
\fint_{\wt{Q}_k}w\,{\rm d}x
l(\wt{Q}_k)^{\dl}
&\le C
\lt[
\sum_{Q_{j_m}\subset\wt{Q}_k}
\lt(
t^q
\fint_{\wt{Q}_k}w\,{\rm d}x
l(Q_{j_m})^{\dl}
\rt)^{d/\dl}
\rt]^{\dl/d}
\\
&\le C
\lt[
\sum_{Q_{j_m}\subset\wt{Q}_k}
\lt(
\int_{Q_{j_m}}f^{p}(\kM_{\gm q} w)^{p/q}\,{\rm d}H^{d}
\rt)^{qd/p\dl}
\rt]^{\dl/d}
\\
&\le C
\lt(
\sum_{Q_{j_m}\subset\wt{Q}_k}
\int_{Q_{j_m}}f^{p}(\kM_{\gm q} w)^{p/q}\,{\rm d}H^{d}
\rt)^{q/p},
\end{align*}
where we have used $qd/p\dl=n/(n-\al +\gm)>1$.
Combining altogether,
we obtain
\begin{align*}
t^q
H^{\dl}_{w}\lt(\bigcup_{j}Q_{j}\rt)
&\le C
\lt(
\sum_m
\int_{Q_{j_m}}f^{p}(\kM_{\gm q} w)^{p/q}\,{\rm d}H^{d}
\rt)^{q/p}
\\ &\le C
\lt(
\int_{\R^{n}}f^{p}(\kM_{\gm q} w)^{p/q}\,{\rm d}H^{d}
\rt)^{q/p},
\end{align*}
where the last inequality is due to 
the packing condition (i). 
This completes the proof. 
\end{proof}

\end{document}